\newtheorem{theorem}{Theorem}
\newtheorem{corollary}[theorem]{Corollary}
\newtheorem{lemma}[theorem]{Lemma}
\theoremstyle{definition}
	\newtheorem{definition}{Definition}
	\newtheorem{remark}{Remark}
\newcommand{\R}{\ensuremath{\mathbb R}} %real numbers
\title[Malliavin differentiability for the excursion measure of a Gaussian field]{Malliavin differentiability for the excursion measure of a Gaussian field}
\author{Leonardo Maini}
\address{Dipartimento di Matematica e Applicazioni\\
		Universit\`a di Roma Tor Vergata\\
		Via della Ricerca Scientifica, 1, 00133 Roma, Italy}
\email{maini@mat.uniroma2.it}
\thanks{}
\date{\today}
\begin{document}
\maketitle

\medskip

\begin{abstract}
In this work, we investigate the Malliavin differentiability of the excursion volume of a Gaussian field. In particular, we prove that if the moments of the covariance kernel go to $0$ polinomially, then we can determine exactly for which $p\in\mathbb N$ the excursion measure has $p$th square integrable Malliavin derivative. While our approach can not be implemented for Gaussian fields indexed by discrete sets (where the moments do not vanish), when the Gaussian field is indexed by $\R^d$ and stationary, or indexed by $S^d$ and isotropic, the condition on the polynomial decay of the mentioned moments can be inferred starting from simple and general conditions on the covariance function of the field. 
% We would like to encourage you to list your keywords within
% the abstract section using the \keywords{...} command.
\keywords{Malliavin calculus, excursion volume, Gaussian fields}
%{\it AMS 2010 Classification: }  .
\end{abstract}

%%%%%%%%%%%%%%%%%%%%%%%%%
%%%%%%%%%%%%%%%%%%%%%%%%%
%%%%%%%%%%%%%%%%%%%%%%%%%
%%%%%%%%%%%%%%%%%%%%%%%%%
%%%%%%%%%%%%%%%%%%%%%%%%%
%%%%%%%%%%%%%%%%%%%%%%%%%
%%%%%%%%%%%%%%%%%%%%%%%%%
%%%%%%%%%%%%%%%%%%%%%%%%%
%%%%%%%%%%%%%%%%%%%%%%%%%
%%%%%%%%%%%%%%%%%%%%%%%%%
%%%%%%%%%%%%%%%%%%%%%%%%%
%%%%%%%%%%%%%%%%%%%%%%%%%
%%%%%%%%%%%%%%%%%%%%%%%%%
\section{Introduction}

Gaussian fields are extremely popular in numerous probabilistic applications for two main reasons: they often arise as very good approximations of general random fields; they are mathematically very convenient. In the last years, they have been studied by many authors in many different settings, see the many references in the bibliography. %\cite{BCP,BCW20,BH23,BMM24,BMM25,BDMT24,BM83,CNN20,CDV21,CM21,CX24,DM79,DRRV23,DMT24,FHMNP25,Gas23,GS24,GMT24,KN19,LMNP24,LMO22,LRM25,Mai23,MN23,MRZ25,MW14,Mar76,MRVK23,}

\medskip

In this work, we consider a real-valued measurable Gaussian field $B=(B_x)_{x\in E}$ indexed by a finite measure space $(E,A,\mu)$, that is, a measurable function
\[
B:(E,A)\times (\Omega, \mathcal{F})\,\longrightarrow\,(\R,\mathcal{B}(\R))\,.
\]
Moreover, we assume that $B$ is centered, with unit variance and covariance kernel
\[
K(x,y):={\rm Cov}(B_x,B_y)=\mathbb E[B_x\,B_y]\,\quad\quad K(x,x)=1\,.
\]

In particular, we study the \textbf{excursion measure} of $B$ in $E$, that is 

\[
V(u)=\int_E \mathbf{1}_{B_x\ge u}\,\mu(dx)\,,\quad\quad \mu(E)<\infty.
\]
and ask ourselves if $V(u)$ has $p$-th square integrable Malliavin derivative, namely $V(u)\in\mathbb{D}^{p,2}$, see Section \ref{malliavin general} for more details. 
This type of functional has been extensively studied in the literature, both as a main subject of study and as part of the following more general class of functionals
\begin{equation}
	\label{Y}
	Y(\varphi)=\int_E \varphi(B_x)\,\mu(dx)\,.
\end{equation}
See e.g. \cite{ADP24,BM83,CNN20,GMT24,KN19,LMNP24,Mai23,MN23,MRZ24,MW14,Mar76,NN20,NPP11,NPY,NZ20a,NZ20osc,NZ21,Ros19,Taq77,Tod19} for some references, where the authors worked in three settings: $E$ discrete with $\mu$ counting measure; $E$ Euclidean domain with $\mu$ Lebesgue measure; $E$ the $d$-dimensional sphere with $\mu$ the associated volume measure.

In particular, many of the mentioned works focused on proving central or non central limit theorems for sequences of functionals of the form \eqref{Y}, often trying to provide also quantitative bounds for the rate fo convergence. In this context, studying the Malliavin differentiability of such functionals is crucial, because it enables us to prove quantitative bounds for the convergence in distribution of sequences of functionals of Gaussian fields to a target distribution, which is usually Gaussian. This is made possible by the Malliavin–Stein method, which was developed by Nourdin and Peccati in \cite{NP09}.

Malliavin differentiability  is usually not easy to be determined for general classes of functionals of Gaussian fields and requires ad-hoc analysis depending on the chosen functional, see e.g. \cite{AP20,PS24}, where the authors investigated the Malliavin differentiability of nodal volumes using different appraches. When studying the Malliavin differentiability, one can follow two approaches: trying to exploit the specific structure of the functionals, using for example its geometric meaning or its specific properties, as done in \cite{AP20,PS24}; using the chaotic decomposition of the functional, as done in \cite{MRZ24}. In this paper, we will follow this latter approach. Our goal is characterizing the Malliavin differentiability of the excursion measure of $B$ under assumptions on the covariance structure of $B$. We will work with fields on general measure spaces, and then analyse and simplify the obtained results in the discrete ($E$ finite, $\mu$ counting measure), Euclidean ($E\subseteq \R^d$, $\mu(E)<\infty$, $\mu$ Lebesgue measure) and spherical ($E=S^d$, $\mu$ associated volume measure) cases.

In particular, our approach will be the one used in \cite[Lemma 1.12]{MRZ24}, where it was shown how Malliavin differentiability for the functionals \eqref{Y} can be induced by a control on $\int_{E^2}K(x,y)^q\,dx\,dy$ as $q\rightarrow\infty$.\\

\noindent
{\bf Notations.} From now on, $N(0,1)$ will be used to denote the standard Gaussian distribution and $\phi(x):=e^{-x^2/2}/\sqrt{2\pi}$ will be the associated probability density function. Moreover, given two functions $f(t), g(t)$, 
we will write $f(t) \lesssim g(t)$
if $$\limsup_{t\to\infty}  \frac{f(t)}{g(t)} \in (0,  \infty)\,.$$ We will also write $f(t) \asymp g(t)$ if $f(t) \lesssim g(t)$ and $g(t) \lesssim f(t)$.\\

We are now ready to state Theorem \ref{thm:main}, the main result of this paper.

\begin{theorem}\label{thm:main}
	Let $B=(B_x)_{x\in E}$ be a measurable Gaussian field on a finite measure space $(E,A,\mu)$, with covariance kernel $K$, and assume that $B_x\sim N(0,1)$, $\forall x \in E$. Suppose that
	\begin{equation}
		\label{assumption moments}
		\int_{E^2}K(x,y)^q\,dx\,dy\asymp q^{-\beta}\,,\quad\quad \beta>0\,.
	\end{equation}
	Then, for all $u\in\R$, we have that $V(u)\in\mathbb{D}^{p,2}$ if and only if $p<\beta+1/2$. 
\end{theorem}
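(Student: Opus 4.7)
My approach is to reduce Theorem \ref{thm:main} to a scalar series convergence question via the Wiener chaos expansion of $V(u)$, together with the standard characterization of $\D^{p,2}$ in terms of chaos projections. Starting from the Hermite expansion of the indicator,
\[
\mathbf{1}_{z\ge u} \;=\; (1-\Phi(u)) + \sum_{q\ge 1} \frac{H_{q-1}(u)\,\phi(u)}{q!}\, H_q(z),
\]
(with $H_q$ the probabilist's Hermite polynomial; this follows from the identity $H_q(z)\phi(z) = -\tfrac{d}{dz}[H_{q-1}(z)\phi(z)]$), I would integrate against $\mu$ and use $\E[H_q(B_x) H_q(B_y)] = q!\,K(x,y)^q$ to obtain the Wiener chaos decomposition of $V(u)$. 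The variance of the $q$th chaos projection $J_q V(u)$ is then
\[
\E\bigl[(J_q V(u))^2\bigr] \;=\; \frac{\phi(u)^2\, H_{q-1}(u)^2}{q!} \int_{E^2} K(x,y)^q\, \mu(dx)\mu(dy).
\]

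Next I would invoke the standard criterion $V(u) \in \D^{p,2} \iff \sum_{q\ge 1} q^p\, \E[(J_q V(u))^2] < \infty$. The assumption \eqref{assumption moments} gives $\int_{E^2} K(x,y)^q\,\mu(dx)\mu(dy) \asymp q^{-\beta}$, and the classical Plancherel--Rotach asymptotic for the probabilist's Hermite polynomials yields, for fixed $u$,
\[
\frac{H_n(u)^2}{n!} \;\sim\; \frac{C_u}{\sqrt n}\, \cos^2\!\bigl(\sqrt{n+\tfrac12}\,u - \tfrac{n\pi}{2}\bigr),
\]
so $H_{q-1}(u)^2/q! \asymp q^{-3/2}$ in the paper's sense. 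Combining these estimates, $q^p\, \E[(J_q V(u))^2] \asymp q^{\,p - \beta - 3/2}$, and this series converges if and only if $p - \beta - 3/2 < -1$, i.e.\ $p < \beta + 1/2$, which is exactly the claimed threshold.

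I expect the main technical obstacle to be the oscillatory $\cos^2$ factor in the Plancherel--Rotach asymptotic, which vanishes along certain subsequences of $n$. For the sufficient direction ($p < \beta + 1/2 \Rightarrow V(u) \in \D^{p,2}$) only the upper bound $\cos^2 \le 1$ is needed, and this direction can essentially be read off from \cite[Lemma 1.12]{MRZ24}. For the necessary direction, I would argue (for $u \ne 0$) by Weyl equidistribution that the phase $\sqrt{n+1/2}\,u - n\pi/2 \pmod \pi$ is equidistributed, so $\cos^2(\cdot)$ is bounded below by a positive constant on a subset of $\mathbb N$ of positive density; restricting the series to this subsequence already yields divergence when $p \ge \beta + 1/2$, hence so does the full series. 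The boundary case $u = 0$, where $H_{q-1}(0)$ vanishes for even $q$, would be handled separately by a direct calculation using $H_{2k}(0) = (-1)^k (2k)!/(2^k k!)$ together with Stirling's formula on the odd subsequence.
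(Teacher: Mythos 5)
Your reduction of the theorem to the scalar series $\sum_q q^{p-\beta} H_{q-1}^2(u)/q!$ coincides exactly with the paper's: same Hermite expansion of the indicator, same coefficients $a_q=H_{q-1}(u)\phi(u)/q!$, same chaos-variance criterion for membership in $\mathbb{D}^{p,2}$, and the same use of \eqref{assumption moments} to strip off the kernel integral. Where you genuinely diverge is in deciding when that series converges, which is the technical heart of the paper's Section \ref{section proof}. The paper proves the equivalence $\sum_q H_q^2(u)\,q^{-\gamma}/q!<\infty \iff \gamma>1/2$ by writing $q^{-\gamma}\asymp\int_0^1 r^q(1-r)^{\gamma-1}\,dr$, swapping sum and integral by Tonelli, and recognizing $\sum_q H_q^2(u)r^q/q!$ as the diagonal Mehler kernel $M_r(u,u)\asymp (1-r)^{-1/2}$ as $r\uparrow 1$; the oscillation of $H_q(u)$ in $q$ is averaged out automatically and no case distinction on $u$ is needed. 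You instead invoke Plancherel--Rotach asymptotics pointwise, which forces you to confront the $\cos^2$ factor: the trivial upper bound gives sufficiency, while necessity requires Weyl equidistribution of the phase $\sqrt{n+1/2}\,u-n\pi/2$ modulo $\pi$ to secure a positive-density subsequence with $\cos^2$ bounded below, plus a separate Stirling computation at $u=0$. This route is sound; the only imprecision is that the Plancherel--Rotach relation should be stated with an additive $O(n^{-1/2})$ error inside the cosine bracket rather than as a multiplicative $\sim$ (the latter fails along the subsequences where the cosine vanishes), which is harmless both for your upper bound and for the lower bound restricted to the set where $\cos^2\ge 1/2$. In short, the paper's Mehler/Beta-integral device buys a shorter, case-free argument; yours uses more classical ingredients but pays for them with the equidistribution detour and the $u=0$ patch.
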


\begin{remark}{(Decay of coefficients and smoothness)}
	The result highlights that the faster is the decay of the moments of $K$, the smoother is $V(u)$. As we will see (see \eqref{MalliavinY}), these moments are related to the variances of the chaotic components of $V(u)$, for which we know that a fast decay indicates a smoother functional in the Malliavin sense. This is not a completely new phenomenon in the  literature. Think for example to the analogy to Fourier coefficients, where a faster decay to zero indicates a smoother function. A similar phenomenon has been also observed in other contexts, see e.g. \cite{LS15}, where the authors explored the relation between the angular power spectrum and the regularity  of isotropic Gaussian fields on the sphere.
\end{remark}
\noindent
Let us now comment our result in the discrete, Euclidean and spherical cases:
\begin{itemize}
	\item If $E$ finite, $\mu$ counting measure, $|K(x,y)|<1$ for $x\neq y$, then (Remark \ref{remark})
	\[
	\int_{E^2}K(x,y)^q\,dx\,dy\asymp 1\,,\quad \quad  V(u)\notin \mathbb{D}^{1,2}.
	\]
	\item If $E\subseteq \R^d$, $\mu$ Lebesgue measure, $\mu(E)<\infty$, then Lemma \ref{lemmacruciale} and a "$\alpha$-control" on the dependence/regularity of  $B$ (see \eqref{cond56}-\eqref{minore}) yield
	\[
	\int_{E^2}K(x,y)^q\,dx\,dy\asymp q^{-d/\alpha}\,,\quad \quad  V(u)\in \mathbb{D}^{\lfloor (d/\alpha+1/2)-\rfloor,2}.
	\]
	\item If $E= S^d$, $\mu$ volume measure, then Lemma \ref{lemmacrucialespherical} and a "$\alpha$-control" on the dependence/regularity of $B$ (see \eqref{cond56 spherical}-\eqref{minore spherical}) yield
	\[
	\int_{E^2}K(x,y)^q\,dx\,dy\asymp q^{-d/\alpha}\,,\quad \quad  V(u)\in \mathbb{D}^{\lfloor (d/\alpha+1/2)-\rfloor,2}.
	\]
\end{itemize} 

In particular, in both the Euclidean and spherical case, if $B$ is $C^1$ then $\alpha=2$, implying $\int_{E^2}K(x,y)^q\,dx\,dy\asymp q^{-d/2}$ and $V(u)\in \mathbb{D}^{\lfloor (\frac{d+1}{2})-\rfloor,2}$ . In these two continuous cases, the result can be interpreted as follows: if $\alpha$ is a parameter describing the roughness and local dependence of $B$, a low $\alpha$ (i.e. low local dependence and higher roughness) implies a smoother excursion volume in the Malliavin sense. This phenomenon is similar to the one we can observe when studying local times of the fractional Brownian motion, where a smaller Hurst index (corresponding to a process which is less regular in the Hölder sense) implies a smoother local time of the process, see e.g. \cite{JNP21}. \\

We remark that choosing $\alpha$ small enough, our result could be also combined with Malliavin-Stein method to obtain new quantitative CLTs for excursion measures (or even more general classes of functionals) of Gaussian fields. This research direction is left for future works.\\

The rest of the paper is organized as follows. In Section \ref{malliavin general} we focus on the Malliavin differentiability of a general functional, introducing some notations, preliminaries and showing why our approach is empty and not well suited for the discrete case. In Section \ref{malliavin euclidean spherical} we focus on the Euclidean and spherical cases, explaining how the approach introduced in \cite[Lemma 1.12]{MRZ24}, combined with Theorem \ref{thm:main}, allows to derive the Malliavin differentiability of excursion measures from local regularity and dependency properties of $B$. Finally, Section \ref{section proof} contains the proof of Theorem \ref{thm:main} and some necessary technical results on Mehler'skernel and Mehler's formula.

\section{Preliminaries: Malliavin differentiability for general functionals and a characterization in the discrete case}\label{malliavin general}
The approach we will follow is based on the fact that the Malliavin differentiability of $Y(\varphi)$ can be inferred by the chaotic decomposition of the functional. Since $\varphi\in L^2(\mathbb{R},\phi(x)\,dx)$, we can decompose $\varphi$ with respect to the orthogonal basis of Hermite polynomials $(H_q)_{q\ge0}$
\begin{equation}
	\label{decophi}
	\varphi(x):=\sum_{q=0}^\infty a_q H_q(x)\,,
\end{equation}
where 
\[
a_q(\varphi):=\frac{1}{q!}\int_{\R} H_q(x)\,\varphi(x)\,\phi(x)\,dx\,.
\]
Hermite polynomials can be defined in many different ways. Here we use the following recursive definition
\[
H_0(x)=1\,,\quad H_1(x)=x\,,\quad H_{q+1}(x)=xH_q(x)-qH_{q-1}(x).
\]
Since Hermite polynomials satisfy the Hermite isometry property, namely for $N_1,N_2\sim N(0,1)$ jointly Gaussian
\begin{equation}\label{Herm iso}
	\mathbb{E}[H_q(N_1)H_p(N_2)]=q!\,\mathbf{1}_{p=q}\,(\mathbb{E}[N_1N_2])^q\,,
\end{equation}
we have
\begin{equation}
	\label{normphi}
	\|\varphi\|^2_{L^2(\mathbb R, \phi)}=\sum_{q=0}^\infty q!\,a_q^2<\infty 
\end{equation}
The decomposition \eqref{decophi} implies an othogonal decomposition in $L^2(\Omega)$ for $Y(\varphi)$ (see e.g. \cite{NP12}), the so-called chaotic decomposition, of the form
\[
Y(\varphi)=\sum_{q=0}^\infty Y(\varphi)[q]:=\sum_{q=0}^\infty a_q(\varphi)\,\int_{E}H_q(B_x)\mu(dx)
\]
where $Y(\varphi)[q]$ is said the $q$th chaotic component of $Y(\varphi)$. 

Malliavin differentiability is usually defined in a manner similar to Sobolev spaces, as a differential operator acting on regular functions of elementary Gaussian random variables that generate the Gaussian space on which we are working, and then extending the operator by taking the closure in an $L^p$ space, see for example \cite{NP12,Nua06}. Here we use the following definition, which is more suitable for our purpose. The definitions are equivalent (in the $L^2$ sense), see for example \cite[Section 2.7]{NP12}.

\begin{definition}
	Let us fix $p\in\mathbb N$. We say that $\varphi$ has $p$th (square integrable) Malliavin derivative, i.e. $\varphi\in \mathbb D^{p,2}$, if 
	\[
	\sum_{q=0}q^p\,q!\,a_q^2<\infty.
	\]
	We say that $Y$ has $p$th (square integrable) Malliavin derivative, i.e. $Y\in \mathbb D^{p,2}$, if 
	\begin{equation}
		\label{MalliavinY}
		\sum_{q=0}^\infty q^p {\rm Var}(Y[q])=\sum_{q=0}^\infty q^p\,q!\,a_q^2\,\int_{E^2}K(x,y)^q\,\mu(dx)\mu(dy)<\infty\,.
	\end{equation}
\end{definition}
As a consequence of the previous definition, Malliavin differentiability for functionals of the form \eqref{Y} can be implied by two factors:
\begin{itemize}
	\item The Malliavin regularity of $\varphi$, i.e., the decay as $q\rightarrow\infty$ of the coefficients
	
	\begin{equation}
		\label{aq}
		q\mapsto q!\,a_q^2(\varphi):=q!\left(\int_{\R} H_q(x)\,\varphi(x)\,\phi(x)\,dx\right)^2
	\end{equation}
	\item A low correlation of $B_x$ and $B_y$ for $x,y\in E$, implying a good decay as $q\rightarrow\infty$ of the double integral
	\[
	q\mapsto \int_{E^2} \rho^q(x,y)\,\mu(dx)\,\mu(dy)
	\]
\end{itemize}
What we can notice, for example, is that $|K|\le 1$ immediately implies that $\varphi\in \mathbb{D}^{p,2}\implies Y(\varphi)\in\mathbb{D}^{p,2}.$ Nevertheless, the viceversa is not always true. To highlight this fact we state the following criterion, which can be easily derived as a combination of \eqref{normphi} and \eqref{MalliavinY}.

\begin{theorem}\label{thm:main1}
	Assume $B$ Gaussian field with $B_x\sim N(0,1)$ for every $x$. Then:
	\begin{itemize}
		\item If $\int_{E^2}K(x,y)^q\mu(dx)\mu(dy)\lesssim q^{-\beta}$, $\beta>0$, with $p\le \beta$, then $Y(\varphi)\in\mathbb D^{p,2}$.
		\vspace{2mm}
		\item If $\int_{E^2}K(x,y)^q\mu(dx)\mu(dy)\asymp 1$, then $Y(\varphi)\in\mathbb D^{p,2}$ $\iff$ $\varphi\in \mathbb{D}^{p,2}$.
	\end{itemize} 
\end{theorem}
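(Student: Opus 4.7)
The plan is to apply directly the characterization of Malliavin differentiability provided by equation \eqref{MalliavinY}. Setting
\[
I_q := \int_{E^2} K(x,y)^q\,\mu(dx)\,\mu(dy),
\]
we have $Y(\varphi) \in \mathbb{D}^{p,2}$ iff $S_p := \sum_{q=0}^\infty q^p\, q!\, a_q^2\, I_q < \infty$, and $\varphi \in \mathbb{D}^{p,2}$ iff $T_p := \sum_{q=0}^\infty q^p\, q!\, a_q^2 < \infty$. Since $\varphi \in L^2(\R,\phi)$, the identity \eqref{normphi} gives $T_0 = \sum_{q} q!\, a_q^2 < \infty$, and this is the only input needed beyond the chaos expansion. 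Both bullets then reduce to straightforward comparisons between $S_p$ and $T_p$.

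For the first bullet, I would extract from $I_q \lesssim q^{-\beta}$ a threshold $q_0 \ge 1$ and a constant $C > 0$ such that $I_q \le C q^{-\beta}$ for every $q \ge q_0$. Assuming $p \le \beta$, this yields $q^{p} I_q \le C q^{p-\beta} \le C$ for $q \ge q_0$, whence
\[
S_p \;\le\; \sum_{q=0}^{q_0-1} q^p\, q!\, a_q^2\, I_q \;+\; C \sum_{q \ge q_0} q!\, a_q^2 \;<\; \infty,
\]
the head being a finite sum and the tail being controlled by \eqref{normphi}.

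For the second bullet, $I_q \asymp 1$ supplies constants $0 < c \le C < \infty$ and $q_0$ with $c \le I_q \le C$ for every $q \ge q_0$, so sandwiching termwise gives
\[
c \sum_{q\ge q_0} q^p\, q!\, a_q^2 \;\le\; S_p - \sum_{q=0}^{q_0-1} q^p\, q!\, a_q^2\, I_q \;\le\; C \sum_{q \ge q_0} q^p\, q!\, a_q^2,
\]
which shows $S_p < \infty$ iff $T_p < \infty$, i.e., $Y(\varphi) \in \mathbb{D}^{p,2}$ iff $\varphi \in \mathbb{D}^{p,2}$. There is essentially no obstacle: the entire analytic content is packaged into \eqref{MalliavinY}, and the remainder is an elementary comparison test. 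The only points one must handle with care are the unusual convention used in this paper for $\lesssim$ and $\asymp$ (the definition of $\lesssim$ via $\limsup \in (0,\infty)$ forces one to extract asymptotic upper and lower bounds from $I_q \asymp 1$) and the need to set aside finitely many small-$q$ terms before applying the asymptotic estimates; neither causes any real difficulty.
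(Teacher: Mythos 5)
Your proof is correct and follows exactly the route the paper intends: the paper gives no separate proof of Theorem \ref{thm:main1}, stating only that it ``can be easily derived as a combination of \eqref{normphi} and \eqref{MalliavinY}'', which is precisely the termwise comparison you carry out (including the correct handling of the $\limsup$-based conventions for $\lesssim$ and $\asymp$). Nothing further is needed.
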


\begin{remark}{(Malliavin differentiability for discrete functionals).}\label{remark}
	When $E$ finite and $\mu$ is the counting measure, we have
	\[
	\int_{E^2}K(x,y)^q\mu(dx)\mu(dy)=|E|+\int_{E^2\,,\,x\neq y}K(x,y)^q\mu(dx)\mu(dy)\,.
	\]
	Thus, assuming $K(x,y)<1$ for $x\neq y$, we have $Y(\varphi)\in\mathbb D^{p,2}$ $\iff$ $\varphi\in \mathbb{D}^{p,2}$, meaning that in the discrete case we can not improve the Malliavin differentiability of the functional giving conditions on $B$. As we are going to see, things are very different in the continuous settings of Euclidean and spherical functionals, where in general $\int_{E^2}K(x,y)^q\mu(dx)\mu(dy)$ can vanish polynomially as $q\rightarrow\infty$.
\end{remark}

\section{Malliavin differentiability of Euclidean/spherical functionals and excursion volumes}\label{malliavin euclidean spherical}
When $E\subseteq \mathbb{R}^d$ compact, $\mu$ Lebesgue measure, $0<\mu(E)<\infty$, things are very different from the discrete case analyzed in Remark \ref{remark}. Indeed, if we have some uniform control on the dependence in $B$ (see conditions \eqref{cond56}-\eqref{cond56 spherical}), we can derive bounds of the form 
\[
\int_{E^2}K(x,y)^q\mu(dx)\mu(dy)\asymp \int_{E^2\,,\,{\rm dist}(x,y)<\epsilon}K(x,y)^q\mu(dx)\mu(dy)\lesssim q^{-\beta}
\]
with $\beta>0$, that "helps" $Y(\varphi)$ to be Malliavin differentiable even when $\varphi$ is very irregular in the Malliavin sense, recall \eqref{MalliavinY}. For Eculidean and spherical functionals, where that measure $\mu$ is the suitable volume measure on the space we consider, $V(u)$ is said the \textbf{exucrsion volume} of $B$ in $E$ at level $u\in\R$.

\subsection{Euclidean functionals}
Malliavin differentiability for local functionals was studied in \cite{AP20,PS24}, where the authors focused on the regularity of nodal volumes. As far as we know, for our specific class of functionals \eqref{Y} the first result on the Malliavin differentiability of $Y(\varphi)$ was proved in \cite{MRZ24}, as a consequence of the following lemma.
\begin{lemma}{\cite[Lemma 1.12]{MRZ24}}\label{lemmacruciale} Suppose $B$ Gaussian field on $\R^d$ stationary with continuous covariance function  $C(x-y):=K(x,y)$, $C(0)=1$. Let $C_1, C_2, \alpha, \delta, \epsilon$ be positive constants. 
	Suppose that the covariance function $C$
	satisfies the following bounds:
	\begin{align} \label{cond56}
		| C(x) | \leq C_1 \|x\|^{-\delta}, \,\, \forall x\in\R^d
		\quad
		\text{and}
		\quad
		1 - C(y) \ge C_2  \|x\|^\alpha
		\,\,\text{for $\|x\| \leq \epsilon$.}
	\end{align}
	Then,  there exists $c>0$ such that 
	$$
	\int_{\R^d}|C(z)|^N dz 
	\le\,c\, N^{-\frac{d}{\alpha}}
	$$
	for any integer $N \geq \frac{d}\delta +1$.  Moreover if we additionally assume that 
	\begin{equation}\label{minore}
		1-C(y) \le C_3 \|x\|^{\alpha}
	\end{equation}
	for some $C_3>0$ and for $\|x\|\le \epsilon$, we have 
	\begin{equation}\label{stima_momenti_gen}
		\int_{\R^d}|C(z)|^N dz 
		\asymp N^{-\frac{d}{\alpha}}.
	\end{equation} 
	
\end{lemma}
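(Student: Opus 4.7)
My plan is to split the integral $\int_{\R^d}|C(z)|^N\,dz$ into three regions: a small ball $\{\|z\|\le \eta\}$ where $C$ is close to $1$ and the lower bound $1-C(z)\ge C_2\|z\|^\alpha$ drives the $N^{-d/\alpha}$ decay; an annulus $\{\eta\le\|z\|\le R\}$; and an outer region $\{\|z\|\ge R\}$ where the polynomial upper bound $|C(z)|\le C_1\|z\|^{-\delta}$ is strictly less than $1$ and therefore forces exponential-in-$N$ smallness. The contributions of the annulus and the outer region will be negligible compared to that of the small ball, which dictates the order $N^{-d/\alpha}$.

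For the upper bound I would first use the continuity of $C$ and $C(0)=1$ to fix $\eta\in(0,\epsilon]$ so small that $C(z)\ge 1/2$ for $\|z\|\le \eta$; then $|C(z)|=C(z)\le 1-C_2\|z\|^\alpha\le e^{-C_2\|z\|^\alpha}$ and the change of variables $u=(NC_2)^{1/\alpha}z$ yields
\[
\int_{\|z\|\le\eta}|C(z)|^N\,dz \;\le\; (NC_2)^{-d/\alpha}\int_{\R^d}e^{-\|u\|^\alpha}\,du\;\lesssim\; N^{-d/\alpha}.
\]
Next, choose $R>C_1^{1/\delta}$ so that $|C(z)|\le C_1\|z\|^{-\delta}<1$ for $\|z\|\ge R$; integrating in polar coordinates gives
\[
\int_{\|z\|\ge R}|C(z)|^N\,dz \;\le\; \frac{\omega_d\,R^d}{N\delta-d}\left(\frac{C_1}{R^\delta}\right)^{N},
\]
which is exponentially small in $N$ provided $N>d/\delta$. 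For the annulus I would argue that $|C(z)|<1$ for every $z\neq 0$: if $C(z_0)=\pm 1$, then stationarity forces $B_{z_0}=\pm B_0$ a.s., which makes $C$ periodic and contradicts $|C(x)|\le C_1\|x\|^{-\delta}\to 0$. By continuity and compactness, $\sup_{\eta\le\|z\|\le R}|C(z)|<1$, so the annular contribution is also exponentially small in $N$. Summing the three pieces gives the claimed upper bound.

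For the matching lower bound under the extra hypothesis $1-C(z)\le C_3\|z\|^\alpha$, choose $\eta':=\min(\epsilon,(2C_3)^{-1/\alpha})$, so that on $\|z\|\le \eta'$ we have $C(z)\ge 1-C_3\|z\|^\alpha\in[1/2,1]$. The elementary inequality $1-x\ge e^{-2x}$ valid for $x\in[0,1/2]$ then gives $C(z)^N\ge e^{-2N C_3\|z\|^\alpha}$, and the rescaling $u=(2NC_3)^{1/\alpha}z$ produces
\[
\int_{\R^d}|C(z)|^N\,dz \;\ge\; (2NC_3)^{-d/\alpha}\int_{\|u\|\le(2NC_3)^{1/\alpha}\eta'}e^{-\|u\|^\alpha}\,du,
\]
which is $\asymp N^{-d/\alpha}$ by monotone convergence as $N\to\infty$.

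The main technical hurdle I anticipate is the annulus $\eta\le\|z\|\le R$: the quadratic-type expansion near the origin only controls $C$ for $\|z\|\le\epsilon$, while the polynomial bound becomes informative only once $\|z\|>C_1^{1/\delta}$, so neither assumption alone handles the intermediate range; the separate periodicity argument ruling out $|C|=1$ off the origin is what bridges the gap. Everything else reduces to straightforward rescalings once the three-region split is in place.
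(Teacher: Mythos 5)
Your proposal is correct: the three-region split (rescaling $u=(NC_2)^{1/\alpha}z$ near the origin, the compactness/periodicity argument giving $\sup|C|<1$ on the annulus, and the polynomial tail bound requiring $N\delta>d$) is sound, and the lower bound via $1-x\ge e^{-2x}$ on $[0,1/2]$ matches the upper bound. The paper does not reprove this lemma (it imports it from \cite[Lemma 1.12]{MRZ24}), but your argument is essentially the same localization-and-rescaling strategy the paper carries out explicitly for the spherical analogue (Lemma \ref{lemmacrucialespherical}) and sketches in Remark \ref{remark 3}, with your annulus/outer-region discussion supplying the extra care needed because $\R^d$ is non-compact.
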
 
\begin{remark}\label{remark 3}
	The previous results holds even if we take $C^N$ instead of $|C|^N$. Indeed, under the same assumptions of Lemma \ref{lemmacruciale} one can prove 
	\[
	\int_{\R^d}|C^N(z)|\,dz\asymp\int_{\|z\|<\epsilon}|C^N(z)|\,dz=\int_{\|z\|<\epsilon}C^N(z)\,dz\asymp N^{-d/\alpha}\,,
	\]
	for every $\epsilon>0$ arbitrary small. See \cite[Remark 4.1]{MRZ24}
\end{remark}
Condition \eqref{cond56} is not a Hölder condition and can be seen as a condition giving control on the local dependence of the Gaussian field. On the other hand, Condition \eqref{minore} is a Hölder regularity condition.

If $B$ is $C^1$ and stationary, then both the conditions are satisfied with $\alpha=2$. This is the case for example for the famous Berry random wave model, a smooth Gaussian field which has been extensively studied in the last years, see e.g. \cite{MN23,NPR19,PV20,Vid21}. If $B$ is not $C^1$, then the conditions could be satisfied with $\alpha\in(0,2)$. For example if $C$ has the following form
\[
C(x)=e^{-\|x\|^{\alpha}}\quad\quad \alpha\in(0,2]\,
\]
then we have 
\[
C(x)=1-\alpha \|x\|^{\alpha}+o(\|x\|^{\alpha})\,,\quad \quad \|x\|\rightarrow\,0\,.
\]
As an easy consequence of Lemma \ref{lemmacruciale} and Theorem \ref{thm:main1}, we obtain the following result, proved for the first time in \cite{MRZ24}.

\begin{corollary}
	Suppose $B$ Gaussian field on $\R^d$ centered and stationary with continuous covariance function  $C(x-y):=K(x,y)$, $C(0)=1$. If condition \eqref{cond56} holds and $p\le d/\alpha$, then $Y(\varphi)\in \mathbb D^{p,2}$.
\end{corollary}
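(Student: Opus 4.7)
The plan is to verify the hypothesis of the first bullet of Theorem \ref{thm:main1} with $\beta = d/\alpha$, so that the conclusion $Y(\varphi)\in\mathbb{D}^{p,2}$ under $p\le d/\alpha$ follows immediately. All the real analytic work has already been done in Lemma \ref{lemmacruciale}; the task here is simply to translate its bound on $\R^d$ into a bound on $E^2$, using stationarity plus $\mu(E)<\infty$.

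First I would use stationarity to reduce the double integral over $E^2$ to a one-dimensional integral of powers of $C$. By the change of variables $z=x-y$ (with $y$ integrated out),
\[
\int_{E^2} K(x,y)^q\,\mu(dx)\,\mu(dy) \;=\; \int_{\R^d} C(z)^q\,\mu\bigl(E\cap(E-z)\bigr)\,dz \;\le\; \mu(E)\int_{\R^d}|C(z)|^q\,dz,
\]
where I have used $\mu(E\cap(E-z))\le\mu(E)<\infty$. Next, Lemma \ref{lemmacruciale} under condition \eqref{cond56} gives a constant $c>0$ such that $\int_{\R^d}|C(z)|^q\,dz \le c\,q^{-d/\alpha}$ for every integer $q\ge d/\delta+1$. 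The finitely many smaller $q$ contribute at most the trivial bound $\mu(E)^2$, which can be absorbed into a larger constant, so overall
\[
\int_{E^2} K(x,y)^q\,\mu(dx)\,\mu(dy) \;\le\; C\,q^{-d/\alpha}\quad\text{for all } q\ge 1.
\]

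Finally, setting $\beta := d/\alpha$, the assumption $p\le d/\alpha$ becomes $p\le\beta$, and the first bullet of Theorem \ref{thm:main1} applies verbatim to give $Y(\varphi)\in\mathbb{D}^{p,2}$. I do not foresee any genuine obstacle; the only care point is the change-of-variables step, where the compactness/finiteness of $E$ and the translation-invariance afforded by stationarity are both essential. In particular, this confirms the philosophy of Remark \ref{remark 3} and the discussion preceding the statement: in the stationary Euclidean setting the low local correlation of $B$ encoded in $\alpha$ is precisely what boosts the Malliavin regularity of $Y(\varphi)$ beyond that of $\varphi$ itself.
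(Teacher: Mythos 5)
Your proposal is correct and follows essentially the same route as the paper: reduce to the first bullet of Theorem \ref{thm:main1} with $\beta=d/\alpha$, convert the double integral over $E^2$ into $\int_{\R^d}C(z)^q\,\mu(E\cap(E+z))\,dz\le \mu(E)\int_{\R^d}|C(z)|^q\,dz$ by stationarity, and invoke Lemma \ref{lemmacruciale}. Your extra care about the finitely many integers $q<d/\delta+1$ is a small but welcome refinement that the paper leaves implicit.
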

\begin{proof}
	By Theorem \ref{thm:main1}, we only need to prove that 
	\[
	\int_{E^2}C(x-y)^q\,dx\,dy\lesssim q^{-d/\alpha}\,.
	\]
	This follows immediately by condition \eqref{cond56} and Lemma \ref{lemmacruciale}
	\begin{equation}
		\label{upper bound}
		\int_{E^2}C(x-y)^q\,dx\,dy=\int_{\R^d}C(z)^q\,{\rm Vol}(E\cap (E+z))\,dz\le {\rm Vol}(E)q^{-d/\alpha}\,.
	\end{equation}
\end{proof}
Note that even if $\varphi$ is very irregular in the Malliavin sense, if I choose $\alpha$ small enough (i.e. low enough local dependence) I have that $Y(\varphi)$ is Malliavin differentiable. 

In particular, for the specific case of excursion volumes, combining Theorem \ref{thm:main} and Lemma \ref{lemmacruciale}, we can characterize the Malliavin differentiability of $V(u)$ under conditions \eqref{cond56}-\eqref{minore}.

\begin{theorem}
	Suppose $B$ Gaussian field on $\R^d$ centered and stationary with continuous covariance function  $C$, $C(0)=1$. If conditions \eqref{cond56}-\eqref{minore} hold, then $V(u)\in\mathbb{D}^{p,2}$ $\iff$ $p<d/\alpha+1/2$.
\end{theorem}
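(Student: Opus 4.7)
The plan is to reduce the statement directly to Theorem \ref{thm:main} by verifying that, under \eqref{cond56}--\eqref{minore}, the integral controlling the chaotic variances satisfies the two-sided asymptotic
\[
\int_{E^2} C(x-y)^q\,dx\,dy \asymp q^{-d/\alpha}.
\]
Once this is established with $\beta=d/\alpha$, Theorem \ref{thm:main} yields the claimed equivalence $V(u)\in\mathbb{D}^{p,2}\iff p<d/\alpha+1/2$ at every level $u\in\R$.

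After the change of variables $z=x-y$, the integral becomes
\[
\int_{\R^d} C(z)^q\,{\rm Vol}\bigl(E\cap(E+z)\bigr)\,dz.
\]
The upper bound is exactly the one already carried out in the proof of the preceding corollary: using ${\rm Vol}(E\cap(E+z))\le {\rm Vol}(E)$ and $C^q\le |C|^q$, Lemma \ref{lemmacruciale} gives
\[
\int_{E^2} C(x-y)^q\,dx\,dy \;\le\; {\rm Vol}(E)\int_{\R^d} |C(z)|^q\,dz \;\lesssim\; q^{-d/\alpha}.
\]

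For the matching lower bound I would invoke Remark \ref{remark 3}, which under \eqref{cond56}--\eqref{minore} gives, for every sufficiently small $\epsilon>0$,
\[
\int_{\|z\|<\epsilon} C(z)^q\,dz \asymp q^{-d/\alpha};
\]
note that $C$ is strictly positive on such a ball because \eqref{minore} forces $C(z)\ge 1-C_3\|z\|^\alpha>0$ for $\|z\|$ small. To transfer this to the desired lower bound I use that the map $z\mapsto {\rm Vol}(E\cap(E+z))$ is continuous at $0$ with value ${\rm Vol}(E)>0$ (the standard $L^1$-continuity of translations applied to $\mathbf{1}_E$); hence, after possibly shrinking $\epsilon$, ${\rm Vol}(E\cap(E+z))\ge {\rm Vol}(E)/2$ for all $\|z\|<\epsilon$, and therefore
\[
\int_{E^2}C(x-y)^q\,dx\,dy \;\ge\; \frac{{\rm Vol}(E)}{2}\int_{\|z\|<\epsilon}C(z)^q\,dz \;\asymp\; q^{-d/\alpha}.
\]

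The bulk of the analytic work is already packaged inside Lemma \ref{lemmacruciale} and Remark \ref{remark 3}, so no genuine obstacle arises here; the only mildly delicate point is ensuring that the volume factor ${\rm Vol}(E\cap(E+z))$ does not degenerate as $z\to 0$, which is exactly the continuity-of-translation argument above. The matching upper and lower bounds together give $\beta=d/\alpha$, and a direct application of Theorem \ref{thm:main} completes the proof.
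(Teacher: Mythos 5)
Your proposal is correct and follows essentially the same route as the paper: reduce to Theorem \ref{thm:main} by establishing $\int_{E^2}C(x-y)^q\,dx\,dy\asymp q^{-d/\alpha}$, via the covariogram change of variables, the upper bound from Lemma \ref{lemmacruciale}, and the lower bound from Remark \ref{remark 3} together with the continuity of $z\mapsto{\rm Vol}(E\cap(E+z))$ at the origin. The only small imprecision is that your displayed lower bound silently discards the contribution from $\|z\|\ge\epsilon$, which need not be nonnegative when $q$ is odd; as in the paper, one should note that this tail is $O\left(\int_{\|z\|>\epsilon}|C(z)|^q\,dz\right)=o(q^{-d/\alpha})$, so it cannot spoil either bound.
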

\begin{proof}
	By Theorem \ref{thm:main}, we only need to prove that 
	\[
	\int_{E^2}C(x-y)^q\,dx\,dy\asymp q^{-d/\alpha}\,.
	\]
	First of all, we can write 
	\[
	\int_{E^2}C(x-y)^q\,dx\,dy=\int_{\|z\|\le \epsilon}|C(z)|^q\,{\rm Vol}(E\cap (E+z))\,dz+O\left(\int_{\|z\|> \epsilon}|C(z)|^q\,\,dz\right)\,.
	\]
	Here $\epsilon>0$ was chosen small enough to have $C(z)$ positive for $|z|\le \epsilon$ and to use conditions \eqref{cond56}-\eqref{minore}. Moreover, the function $z\mapsto {\rm Vol}(E\cap (E+z))$ is continuous in $0$ (see \cite{G11}), so we can choose $\epsilon$ small enough to have ${\rm Vol}(E\cap (E+z))>{\rm Vol}(E)/2>0$ for $\|z\|\le \epsilon$. With this choice, by conditions \eqref{cond56}-\eqref{minore} and Lemma \ref{lemmacruciale} we have (see also Remark \ref{remark 3})
	\[
	\int_{\|z\|\le \epsilon}|C(z)|^q\,{\rm Vol}(E\cap (E+z))\,dz\asymp \int_{\|z\|\le \epsilon}|C(z)|^q\,dz\asymp \int_{\R^d}|C(z)|^q\,dz\asymp q^{-d/\alpha}\,.
	\]
	Moreover, by reasoning as in \cite[Remark 4.1]{MRZ24}, one can prove
	\[
	\int_{\|z\|> \epsilon}|C(z)|^q\,\,dz=o(q^{-d/\alpha})\,.
	\]
\end{proof}

\subsection{Spherical functionals}
In this subsection, we study the case where $E=S^d$ and $\mu$ is the associated volume measure.
Following an analogous approach, we can study local conditions of the form \eqref{cond56}-\eqref{minore} in the spherical case (see conditions \eqref{cond56 spherical}-\eqref{minore spherical}), to derive Malliavin differentiability for analogous functionals. In this case these conditions are also related to the decay of the angular power spectrum $(C_\ell)_{\ell\in\mathbb N}$ of the field, see e.g. \cite{DiL25,DMSV25,LS15} for more details and precise statements. 

First of all, let us start with some basic notions. In this subsection, we will assume that the Gaussian field $B$ is isotropic, i.e. its distribution is invariant under the action of the group of rotations $SO(d)$. Under this assumption, the covariance kernel $K$ has the following representation
\[
K(x,y)=\kappa(\langle x,y \rangle)=\sum_{\ell=0}^\infty C\ell\, \frac{n_{\ell,d}}{\omega_{d}}\,G_{\ell,d}(\langle x,y \rangle)\,,\quad\quad x,y\in S^d\,,
\]
where $\kappa:[-1,1]\rightarrow[-1,1]$ is the covariance function of $B$ and
\begin{itemize}
	\item $(C_\ell)_{\ell\in\mathbb N}$, $C_{\ell}\ge0$, is the angular power spectrum of $B$;
	\item $n_{\ell,d}$ is the number of linearly indipendent homogeneous harmonic polynomials of degree $\ell$ in $d+1$ variables
	\[
	n_{\ell,d}=\frac{2\ell+d-1}{\ell}\binom{\ell+d-2}{\ell-1}\sim \ell^{d-1}\,,\quad\quad \text{ as }\ell\rightarrow\infty\,;
	\]
	\item $\omega_d=\mu(S^d)=\frac{2\pi^{\frac{d+1}{2}}}{\Gamma(\frac{d+1}{2})}$ is the total spherical measure;
	\item $(G_{\ell,d})_{\ell\in\mathbb N}$ are the normalized Gegenbauer polynomials (see e.g. \cite[4.7]{Sze39}).
\end{itemize}
\noindent
Let us now prove an analogous of Lemma \ref{lemmacruciale} in the spherical case. 

\begin{lemma}\label{lemmacrucialespherical}
	Let $B$ be a centered, isotropic Gaussian field on $S^d$ with covariance function $\kappa$, $\kappa(1)=1$, and angular power spectrum $(C_\ell)_{\ell\in\mathbb N}$. Let $C_1,C_2,\alpha,\epsilon$ be positive constants. 	Suppose $\kappa:[-1,1]\rightarrow\R$ is continuous and
	satisfies
	\begin{align} \label{cond56 spherical}
		|\kappa(u)| <1 \text{ for } u\neq 1, \,\,
		\quad
		\text{and}
		\quad
		1 - \kappa(\cos(\theta)) \ge C_1  |\theta|^\alpha
		\,\,\text{for $|\theta| \leq \epsilon$.}
	\end{align}
	Then, there exists $c>0$ such that 
	$$
	\int_{(S^d)^2}|\kappa(\langle x,y\rangle)|^N \mu(dx)\mu(dy)
	\le\,c\, N^{-\frac{d}{\alpha}}
	$$
	Moreover, if we additionally assume that 
	\begin{equation}\label{minore spherical}
		1-\kappa(cos(\theta)) \le C_2 |\theta|^{\alpha}
	\end{equation}
	for some $C_2>0$ and for $|\theta|\le \epsilon$, we have 
	\begin{equation}\label{stima_momenti_gen spherical}
		\int_{(S^d)^2}|\kappa(\langle x,y\rangle)|^N \mu(dx)\mu(dy)
		\asymp N^{-\frac{d}{\alpha}}.
	\end{equation} 
\end{lemma}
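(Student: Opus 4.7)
The proof should mirror Lemma~\ref{lemmacruciale}, with compactness of $S^d$ replacing the polynomial decay hypothesis \eqref{cond56} and geodesic coordinates replacing Euclidean ones. First, by isotropy of $B$, fix a pole $x_0\in S^d$ and use spherical coordinates (with the geodesic distance $\theta$ from $x_0$) to write
\[
\int_{(S^d)^2}|\kappa(\langle x,y\rangle)|^N\,\mu(dx)\mu(dy)
=\omega_d\,\omega_{d-1}\int_0^\pi|\kappa(\cos\theta)|^N(\sin\theta)^{d-1}\,d\theta.
\]
Everything then reduces to estimating this one-dimensional integral.

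Next, I would pick a small $\epsilon_0\le\epsilon$ and split $[0,\pi]=[0,\epsilon_0]\cup[\epsilon_0,\pi]$. On $[\epsilon_0,\pi]$, the image $\cos([\epsilon_0,\pi])=[-1,\cos\epsilon_0]$ is compact; by continuity of $\kappa$ together with the strict inequality $|\kappa(u)|<1$ for $u\neq 1$ from \eqref{cond56 spherical}, there exists $\gamma_0=\gamma_0(\epsilon_0)\in(0,1)$ with $|\kappa(u)|\le\gamma_0$ throughout $[-1,\cos\epsilon_0]$. Hence the tail contribution is $\le\pi\gamma_0^N$, which is $o(N^{-d/\alpha})$ and can be absorbed.

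For the near-diagonal contribution on $[0,\epsilon_0]$, continuity makes $\kappa(\cos\theta)$ positive for $\epsilon_0$ small, so $|\kappa|=\kappa$ there. The lower Hölder bound in \eqref{cond56 spherical} combined with $1-x\le e^{-x}$ yields $|\kappa(\cos\theta)|^N\le e^{-NC_1\theta^\alpha}$; using $(\sin\theta)^{d-1}\le\theta^{d-1}$ and the substitution $u=N^{1/\alpha}\theta$ one obtains
\[
\int_0^{\epsilon_0}|\kappa(\cos\theta)|^N(\sin\theta)^{d-1}\,d\theta
\le N^{-d/\alpha}\int_0^\infty e^{-C_1 u^\alpha}u^{d-1}\,du,
\]
which is the desired upper bound. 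Under the additional assumption \eqref{minore spherical}, the matching lower bound follows symmetrically from $\kappa(\cos\theta)\ge 1-C_2\theta^\alpha\ge e^{-2C_2\theta^\alpha}$ (valid once $C_2\theta^\alpha\le 1/2$) and $\sin\theta\ge\theta/2$ near $0$, with the same change of variable; as $N\to\infty$ the truncated integral $\int_0^{\epsilon_0 N^{1/\alpha}}e^{-2C_2 u^\alpha}u^{d-1}\,du$ converges to a strictly positive constant, so the near-diagonal part alone is already $\asymp N^{-d/\alpha}$.

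The main obstacle I anticipate is purely bookkeeping: choosing $\epsilon_0$ small enough so that simultaneously $\kappa(\cos\theta)>0$ on $[0,\epsilon_0]$, both Hölder bounds apply, and the elementary inequalities $1-x\le e^{-x}$, $1-x\ge e^{-2x}$ are in force, while keeping $|\kappa|$ uniformly bounded away from $1$ on the complementary compact region $[-1,\cos\epsilon_0]$. Unlike the Euclidean proof, compactness of $S^d$ trivialises the contribution far from the diagonal, so no analogue of the polynomial decay bound $|C(x)|\le C_1\|x\|^{-\delta}$ from \eqref{cond56} is required; the strict pointwise bound $|\kappa(u)|<1$ for $u\neq 1$ plus continuity is enough.
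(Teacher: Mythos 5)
Your proof is correct and follows essentially the same route as the paper: reduce by isotropy to the one-dimensional integral $\omega_d\omega_{d-1}\int_0^\pi|\kappa(\cos\theta)|^N(\sin\theta)^{d-1}\,d\theta$, control the off-diagonal region by a geometric term using compactness and $|\kappa(u)|<1$ for $u\neq1$, and extract $N^{-d/\alpha}$ from the near-diagonal region via the H\"older-type bounds \eqref{cond56 spherical} and \eqref{minore spherical}. The only (harmless) difference is that you prove the elementary asymptotic $\int_0^{\epsilon}(1-C\theta^\alpha)^N\theta^{d-1}\,d\theta\asymp N^{-d/\alpha}$ explicitly by exponential comparison and the substitution $u=N^{1/\alpha}\theta$, whereas the paper cites \cite[Equation (4.3)]{MRZ24} for that step.
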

\begin{proof}
	First of all, we can write
	\begin{equation}\label{inthspherical}
		\int_{E^2}|\kappa(\langle x,y\rangle)|^N\,\mu(dx)\,\mu(dy)=\omega_d\omega_{d-1}\int_0^\pi |\kappa(cos(\theta))|^N\sin(\theta)^{d-1}d\theta \,.
	\end{equation}
	Moreover, we can split the integral \eqref{inthspherical} in two parts
	\[
	\eqref{inthspherical}= \omega_d\omega_{d-1}\int_0^\epsilon |\kappa(cos(\theta))|^N\sin(\theta)^{d-1}d\theta + O\left(\max_{|u|>\epsilon} |\kappa(u)|^N\right)\,.
	\]
	Finally, using condition \eqref{cond56 spherical} we conclude observing that $\max_{|u|>\epsilon} |\kappa(u)|<1$ and 
	\[
	\int_0^\epsilon |\kappa(cos(\theta))|^N\sin(\theta)^{d-1}d\theta\lesssim \int_0^\epsilon (1-C_2\,\theta^{\alpha})^N\,\theta^{d-1}d\theta\asymp N^{-d/\alpha}\,.
	\]
	where the last asymptotics follows reasoning as in \cite[Equation (4.3)]{MRZ24}. The proof of \eqref{stima_momenti_gen spherical} is analogous and follows by  condition \eqref{minore spherical} with the inverse inequality.
\end{proof}

Exactly as in the Euclidean case, here we can derive the moments'asymptotics of $K$ under conditions on the low dependence (\eqref{cond56 spherical}) and low Hölder regularity/ high roughness (\eqref{minore}) of our field $B$.

Even in this case, if $B$ is $C^1$ both the conditions hold with $\alpha=2$, while in general one could have a field with only Hölder continuous realizations, satisfying conditions \eqref{cond56 spherical}-\eqref{minore spherical} with arbitrary small $\alpha$. We remark that these conditions are also related to the decay of the angular power spectrum of the field, see e.g \cite{DiL25,DMSV25,LS15} for more details. Thus, in this case one could build many examples with specific choices for the sequence $C_\ell$, obtaining random fields which are not $C^1$, but satisfy the conditions of Lemma \ref{lemmacrucialespherical} for abitrary small $\alpha$. 

Combining Lemma \ref{lemmacrucialespherical} with Theorem \ref{thm:main1} and Theorem \ref{thm:main}, we obtain analogous results to the the Euclidean case.

\begin{corollary}
	Suppose $B$ centered, isotropic Gaussian field on $S^d$ with continuous covariance function  $\kappa(\langle x,y \rangle):=K(x,y)$, $\kappa(1)=1$. If condition \eqref{cond56 spherical} holds and $p\le d/\alpha$, then $Y(\varphi)\in \mathbb D^{p,2}$.
\end{corollary}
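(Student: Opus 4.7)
The plan is to reduce the claim to a direct combination of Theorem \ref{thm:main1} and Lemma \ref{lemmacrucialespherical}, in complete parallel to the Euclidean corollary proved in the excerpt. Setting $\beta=d/\alpha$, the first bullet of Theorem \ref{thm:main1} says that $Y(\varphi)\in\D^{p,2}$ provided that
\[
\int_{(S^d)^2} K(x,y)^q\,\mu(dx)\mu(dy)\,\lesssim\, q^{-\beta}
\]
whenever $p\le\beta$. So the whole task is to produce this moment bound on $S^d$.

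For the reduction to the absolute-value version controlled by Lemma \ref{lemmacrucialespherical}, I would note that by the Hermite isometry \eqref{Herm iso},
\[
\int_{(S^d)^2} K(x,y)^q\,\mu(dx)\mu(dy)\;=\;\frac{1}{q!}\,\E\!\left[\Bigl(\int_{S^d} H_q(B_x)\,\mu(dx)\Bigr)^{\!2}\right]\;\ge\;0,
\]
so in particular this (nonnegative) quantity is trivially bounded above by $\int_{(S^d)^2} |K(x,y)|^q\,\mu(dx)\mu(dy)$. Since hypothesis \eqref{cond56 spherical} is precisely the assumption of Lemma \ref{lemmacrucialespherical}, applying that lemma yields
\[
\int_{(S^d)^2}|K(x,y)|^q\,\mu(dx)\mu(dy)\,\lesssim\, q^{-d/\alpha}.
\]
Chaining the two inequalities gives the required bound with $\beta=d/\alpha$, and the hypothesis $p\le d/\alpha$ then lets us conclude via Theorem \ref{thm:main1} that $Y(\varphi)\in\D^{p,2}$.

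There is no real obstacle: the spherical counterpart of Lemma \ref{lemmacruciale} has already been proved (Lemma \ref{lemmacrucialespherical}), so the only content of the argument is the bookkeeping above. The one place that deserves a line of care is the passage from $K^q$ to $|K|^q$, but since the integral of $K^q$ is nonnegative by Hermite isometry the upper bound via $|K|^q$ is automatic and costs nothing. The proof is therefore essentially a two-line application of the lemma and the theorem, mirroring word-for-word the Euclidean corollary and using exactly the same structural idea.
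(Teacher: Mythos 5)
Your proof is correct and matches the paper's (implicit) argument exactly: the paper also obtains this corollary by feeding the moment bound of Lemma \ref{lemmacrucialespherical} into the first bullet of Theorem \ref{thm:main1}, mirroring the Euclidean corollary. Your extra remark that $\int_{(S^d)^2}K(x,y)^q\,\mu(dx)\mu(dy)\ge 0$ and is dominated by the absolute-value moment is a correct (and welcome) justification of a step the paper handles only via the analogue of Remark \ref{remark 3}.
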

\begin{theorem}
	Suppose $B$ centered, isotropic Gaussian field on $S^d$ with continuous  covariance function  $\kappa$, $\kappa(1)=1$. If conditions \eqref{cond56 spherical}-\eqref{minore spherical} hold, then\\ $V(u)\in\mathbb{D}^{p,2}$ $\iff$ $p<d/\alpha+1/2$.
\end{theorem}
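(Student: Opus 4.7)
The plan is to reduce this statement to Theorem \ref{thm:main} by verifying its hypothesis \eqref{assumption moments} with $\beta = d/\alpha$. Once we show
\[
\int_{(S^d)^2} \kappa(\langle x,y\rangle)^q\,\mu(dx)\mu(dy) \asymp q^{-d/\alpha},
\]
the conclusion $V(u) \in \mathbb{D}^{p,2} \iff p < d/\alpha + 1/2$ is immediate from Theorem \ref{thm:main}. The work reduces to transferring the estimate on $|\kappa|^N$ from Lemma \ref{lemmacrucialespherical} to an estimate on $\kappa^N$ itself (without the absolute value), which is what Theorem \ref{thm:main} requires.

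The key observation is that by continuity of $\kappa$ together with $\kappa(1) = 1$, we can pick $\epsilon > 0$ small enough that $\kappa(\cos\theta) > 0$ for $|\theta| \le \epsilon$, and in addition small enough for both conditions \eqref{cond56 spherical} and \eqref{minore spherical} to apply. Mirroring the Euclidean proof above, I would split
\[
\int_{(S^d)^2} \kappa(\langle x,y\rangle)^q\,\mu(dx)\mu(dy) = \omega_d\omega_{d-1}\int_0^\epsilon \kappa(\cos\theta)^q \sin(\theta)^{d-1}\,d\theta + R_q,
\]
where the remainder $R_q$ is controlled by
\[
|R_q| \le \omega_d\omega_{d-1}\,\pi\, \Bigl(\max_{|u|\le\cos\epsilon}|\kappa(u)|\Bigr)^q.
\]
By condition \eqref{cond56 spherical}, $\max_{|u|\le \cos\epsilon}|\kappa(u)| < 1$, so $R_q$ decays exponentially in $q$ and is $o(q^{-d/\alpha})$.

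On the near-diagonal piece $|\theta|\le\epsilon$, since $\kappa(\cos\theta) > 0$ there, $\kappa^q = |\kappa|^q$, and the argument carried out in Lemma \ref{lemmacrucialespherical} applies verbatim: the upper bound follows from \eqref{cond56 spherical}, using $(1-C_1\theta^\alpha)^q \le e^{-C_1 q \theta^\alpha}$ and the substitution $t = q^{1/\alpha}\theta$; the matching lower bound uses \eqref{minore spherical} in place of \eqref{cond56 spherical}. Both sides give $\asymp q^{-d/\alpha}$. Combining these pieces,
\[
\int_{(S^d)^2}\kappa(\langle x,y\rangle)^q\,\mu(dx)\mu(dy) \asymp q^{-d/\alpha},
\]
which is exactly \eqref{assumption moments} with $\beta = d/\alpha$, and Theorem \ref{thm:main} completes the proof.

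I do not anticipate a real obstacle; this is an exact parallel of the Euclidean corollary. The only point requiring mild care is the sign issue: Lemma \ref{lemmacrucialespherical} is stated for $|\kappa|^N$, whereas Theorem \ref{thm:main} requires the pure moment $\int K^q$; this is handled precisely by the splitting above, using the strict bound $|\kappa(u)|<1$ off a neighborhood of $u=1$ together with positivity of $\kappa$ near $u=1$.
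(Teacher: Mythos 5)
Your proposal is correct and is exactly the argument the paper intends: the paper omits the proof, stating only that the result follows by "combining Lemma \ref{lemmacrucialespherical} with Theorem \ref{thm:main1} and Theorem \ref{thm:main}" in analogy with the Euclidean case, and your splitting into a near-diagonal region where $\kappa(\cos\theta)>0$ (so $\kappa^q=|\kappa|^q$) plus an exponentially small remainder is precisely the spherical version of the paper's Euclidean proof. The only nitpick is that the remainder should be bounded by $\bigl(\max_{u\in[-1,\cos\epsilon]}|\kappa(u)|\bigr)^q$ rather than $\bigl(\max_{|u|\le\cos\epsilon}|\kappa(u)|\bigr)^q$, since $\cos\theta$ ranges over all of $[-1,\cos\epsilon]$ for $\theta\in[\epsilon,\pi]$; the same continuity-plus-compactness argument still gives a maximum strictly below $1$.
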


\section{Mehler's kernel/formula and proof of Theorem \ref{thm:main}}\label{section proof}
This last section is devoted to the proof of Theorem \ref{thm:main}, our main result. We start introducing Mehler's kernel and Mehler's formula. \textbf{Mehler's kernel} is defined for $u,v\in\R$ and $r\in(-1,1)$ as follows
\begin{equation}
	\label{Mehlerdef}
	M_r(u,v):= \frac{1}{\sqrt{1-r^2}} \exp\left( \frac{2uvr - (u^2+v^2)r^2}{2(1-r^2)} \right)\,.
\end{equation}
It can be also defined as the ratio of the probability density function of $(N,rN+\sqrt{1-r^2}N')$ and that of $(N,N')$, where $N,N'$ are independent standard Gaussian random variables, namely
\begin{equation}
	\label{Mehler as ratio}
	M_r(u,v):= \frac{\frac{1}{2\pi\sqrt{1-r^2}}\exp\left( -\frac{1}{2(1-r^2)}(u^2+v^2-2uvr) \right)}{\frac{1}{2\pi}\exp\left( -\frac{1}{2}(u^2+v^2) \right)}
\end{equation}
A very important and well known fact in harmonic analysis is that Mehler's kernel can be expanded in terms of Hermite polynomials. The explicit formula we obtain (see \eqref{eq: Mehlerformula}) is called \textbf{Mehler's formula}.\\

There exist several proofs of the formula in the literature. Here we provide a simple probabilistic proof for the sake of completeness.
\begin{lemma}{(Mehler's formula)}
	For every $u,v\in \R$ and $r\in(-1,1)$, we have
	\begin{equation}
		\label{eq: Mehlerformula}
		M_r(u,v)=\sum_{q=0}^\infty H_q(u)H_q(v)\frac{r^q}{q!}\,.
	\end{equation}
\end{lemma}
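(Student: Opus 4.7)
The plan is to identify both sides of \eqref{eq: Mehlerformula} as the same element of $L^2(\R^2,\phi(u)\phi(v)\,du\,dv)$ by matching their Fourier-Hermite coefficients with respect to the tensor orthonormal basis
$$
e_{p,q}(u,v):=\frac{H_p(u)H_q(v)}{\sqrt{p!\,q!}},\qquad p,q\ge 0,
$$
whose completeness follows from \eqref{normphi} applied in each variable. The proof then breaks into three steps: (i) verify $M_r\in L^2(\phi\otimes\phi)$; (ii) compute its Fourier-Hermite coefficients and observe that they match those of the series in \eqref{eq: Mehlerformula}; (iii) upgrade the $L^2$ identity to a pointwise one.

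Step (i) is a direct Gaussian integration: after substituting \eqref{Mehlerdef}, the integrand $M_r(u,v)^2\phi(u)\phi(v)$ is the exponential of a quadratic form in $(u,v)$ whose associated matrix has determinant $1$ and is positive definite precisely when $|r|<1$, yielding $\|M_r\|_{L^2(\phi\otimes\phi)}^2=(1-r^2)^{-1}$. For step (ii), the representation \eqref{Mehler as ratio} shows that $M_r(u,v)\phi(u)\phi(v)$ is the joint density of a centered Gaussian vector $(X,Y)$ with unit marginals and $\mathrm{Cov}(X,Y)=r$; thus
$$
\int_{\R^2}H_p(u)H_q(v)\,M_r(u,v)\phi(u)\phi(v)\,du\,dv=\mathbb{E}\bigl[H_p(X)H_q(Y)\bigr]=p!\,r^p\,\mathbf{1}_{p=q}
$$
by the Hermite isometry \eqref{Herm iso}. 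Dividing by $\sqrt{p!\,q!}$ gives the Fourier-Hermite coefficient of $M_r$ as $r^p\mathbf{1}_{p=q}$, and Parseval's identity in the basis $(e_{p,q})$ yields
$$
M_r(u,v)=\sum_{q=0}^\infty \frac{r^q}{q!}H_q(u)H_q(v)\qquad\text{in } L^2(\phi\otimes\phi).
$$
As a consistency check, the squared $L^2$-norm of the right-hand side equals $\sum_{q\ge 0}r^{2q}=(1-r^2)^{-1}$, matching step (i).

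For step (iii), the left-hand side of \eqref{eq: Mehlerformula} is manifestly continuous (indeed real-analytic) in $(u,v)$ for fixed $|r|<1$, and the right-hand side converges absolutely and locally uniformly in $(u,v)$ by virtue of the classical Cram\'er-type bound $|H_n(x)|\lesssim \sqrt{n!}\,e^{x^2/4}$, which controls the $n$-th term of the series by a constant multiple of $|r|^n e^{(u^2+v^2)/4}$. Equality almost everywhere between two continuous functions is equality everywhere, completing the proof. The only mildly delicate point I anticipate is step (i), namely carrying out the quadratic-form computation cleanly and verifying that its positive-definiteness corresponds exactly to $|r|<1$; everything else reduces to the Hermite orthogonality already stated in \eqref{Herm iso}.
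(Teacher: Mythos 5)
Your proof is correct, but it follows a different route from the paper's. The paper computes the covariance ${\rm Cov}(\mathbf{1}_{N\ge u},\mathbf{1}_{rN+\sqrt{1-r^2}N'\ge v})$ in two ways --- once from the bivariate density \eqref{Mehler as ratio} and once from the Hermite expansion \eqref{chao deco indicator} of the indicator together with \eqref{Herm iso} --- and then differentiates the resulting identity in $u$ and $v$ to recover \eqref{eq: Mehlerformula} pointwise. You instead test $M_r$ against the full tensor Hermite basis of $L^2(\R^2,\phi\otimes\phi)$: the shared core is the same (the probabilistic reading of $M_r\phi\phi$ as a correlated Gaussian density plus the isometry \eqref{Herm iso}), but your weak-to-pointwise upgrade goes through completeness of the two-variable Hermite system and Cram\'er's bound $|H_n(x)|\le C\sqrt{n!}\,e^{x^2/4}$ for local uniform convergence, whereas the paper's goes through the fundamental theorem of calculus applied to distribution functions (and only needs a growth bound on $H_q$ to justify Fubini). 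Your version is the standard harmonic-analysis argument; it makes the $L^2$ structure transparent, gives $\|M_r\|^2_{L^2(\phi\otimes\phi)}=(1-r^2)^{-1}$ as a byproduct (a useful consistency check), but requires you to invoke completeness of the Hermite polynomials in $L^2(\R,\phi)$, which \eqref{normphi} does encode but which the paper's indicator-based argument sidesteps. All three steps of your argument check out: the quadratic form in step (i) indeed has determinant $1$ and is positive definite for $|r|<1$, the coefficient computation in step (ii) is exactly \eqref{Herm iso} applied to a Gaussian pair with correlation $r$, and the continuity argument in step (iii) is sound.
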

\begin{proof}
	Fix $r\in(-1,1)$ and denote by $(N,N')$ two indipendent standard Gaussian random variables. First of all, note that we can write two different expressions for the covariance between $N$ and $rN+ \sqrt{1-r^2}N'$, using the bivariate density (expressed in terms of Mehler's kernel, see \eqref{Mehler as ratio}) 
	\begin{align*}
		{\rm Cov}\left(\mathbf{1}_N\ge u\,,\,\mathbf{1}_{rN+\sqrt{1-r^2}N'}\ge v\right) &=\int_u^\infty \int_v^\infty (M_r(x,y)-1)\phi(x)\phi(y)\,dx\,dy\,.
	\end{align*}
	or using the orthogonal expansion \eqref{chao deco indicator} and Hermite isometry \eqref{Herm iso}
	\begin{align*}
		{\rm Cov}\left(\mathbf{1}_N\ge u\,,\,\mathbf{1}_{rN+\sqrt{1-r^2}N'}\ge v\right) &=\sum_{q=1}^\infty H_{q-1}(u)H_{q-1}(v)\phi(u)\phi(v)\frac{r^{q}}{q!}\\
		&=\int_u^\infty\int_v^\infty\left(\sum_{q=1}^\infty H_{q}(x)H_{q}(y)\phi(x)\phi(y)\frac{r^{q}}{q!}\right)\,dx\,dy\,.
	\end{align*}
	where the last equality follows by using $H_{q-1}(u)\phi(u)=\int_u^\infty H_q(x)\phi(x)\,dx$ and Fubini theorem (note that when $|r|<1$ the series $\sum_{q=0}^\infty H_q(x)H_q(y)r^q/q!$ is absolutely convergent, since $|H_q(x)|\le \sqrt{q!}e^{x\sqrt q}$, see e.g. \cite[(1.2)]{EM90}). Thus, differentiating both expressions, we obtain
	\[
	(M_r(u,v)-1)\phi(u)\phi(v)=\sum_{q=1}^\infty H_{q}(u)H_{q}(v)\phi(u)\phi(v)\frac{r^{q}}{q!}
	\]
	for every $u,v\in\R$, which implies the equality \eqref{eq: Mehlerformula}. 
\end{proof}
Now that Mehler's kernel and Mehler's formula have been introduced, we are able to prove the following lemma, that will be a fundamental tool in the proof of Theorem \ref{thm:main}. 

\begin{lemma}
	Let $(H_q)_{q\ge0}$ be the Hermite polynomials defined in Section \ref{malliavin general}. Then  
	\begin{equation}
		\label{series beta}
		\sum_{q=0}^\infty H_q^2(u)\frac{1}{q!}\frac{1}{q^{\gamma}}<\infty \quad \iff\quad  \gamma>1/2\,\,.
	\end{equation}
\end{lemma}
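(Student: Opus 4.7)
The plan is to use Mehler's formula \eqref{eq: Mehlerformula} with $v=u$ to recognize the series $\sum_q H_q(u)^2 r^q/q!$ as an explicit generating function, extract the precise singular behavior as $r\to 1^-$, and then convert this information into an asymptotic for the partial sums of $a_q:=H_q(u)^2/q!$ via a Tauberian argument. Since the $a_q$ are non-negative, Karamata's Tauberian theorem will do the job cleanly.

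Concretely, first I would plug $v=u$ into \eqref{eq: Mehlerformula} and simplify \eqref{Mehlerdef}, obtaining
\[
\sum_{q=0}^\infty a_q\, r^q \;=\; M_r(u,u) \;=\; \frac{1}{\sqrt{1-r^2}}\exp\!\left(\frac{u^2 r}{1+r}\right),\qquad r\in(-1,1).
\]
As $r\to 1^-$ the right-hand side is asymptotic to $C(u)(1-r)^{-1/2}$ with $C(u)=e^{u^2/2}/\sqrt{2}$. Since $a_q\ge 0$, Karamata's Tauberian theorem (applied with exponent $s=1/2$) then gives
\[
S(N):=\sum_{q=0}^N a_q \;\sim\; \frac{C(u)}{\Gamma(3/2)}\,\sqrt{N}\qquad \text{as } N\to\infty.
\]

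Once this is established, the convergence question reduces to a routine Abel summation. Writing $b_q=q^{-\gamma}$ and summing by parts,
\[
\sum_{q=1}^N \frac{a_q}{q^{\gamma}} \;=\; \frac{S(N)}{N^{\gamma}} + \sum_{q=1}^{N-1} S(q)\!\left(\frac{1}{q^{\gamma}}-\frac{1}{(q+1)^{\gamma}}\right).
\]
The boundary term is of order $N^{1/2-\gamma}$, and the remaining sum is of order $\sum_{q\ge 1} q^{1/2}\cdot q^{-\gamma-1}=\sum_{q\ge 1}q^{-\gamma-1/2}$. Both pieces are finite precisely when $\gamma>1/2$, proving the ``$\Leftarrow$'' direction. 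For ``$\Rightarrow$'', the divergence when $\gamma\le 1/2$ is obtained from the lower bound
\[
\sum_{N<q\le 2N}\frac{a_q}{q^{\gamma}} \;\ge\; \frac{S(2N)-S(N)}{(2N)^{\gamma}} \;\gtrsim\; N^{1/2-\gamma},
\]
which does not tend to zero; summing over dyadic blocks $N=2^k$ then forces the full series to diverge.

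The only nontrivial ingredient is the Tauberian passage from the Abel asymptotic $\sum a_q r^q\sim C(u)(1-r)^{-1/2}$ to the partial-sum asymptotic $S(N)\sim C'(u)\sqrt{N}$; this is where one genuinely uses non-negativity of the $a_q$, since without it an Abel-type singularity of order $1/2$ need not translate into matching partial-sum growth. Everything else is bookkeeping with Mehler's identity \eqref{Mehlerdef} and summation by parts.
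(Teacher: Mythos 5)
Your argument is correct, and it rests on the same central identity as the paper --- Mehler's formula at $v=u$, giving $\sum_q H_q^2(u)r^q/q! = M_r(u,u)\sim e^{u^2/2}(1-r)^{-1/2}/\sqrt{2}$ as $r\to 1^-$ --- but the way you convert this generating-function information into the convergence criterion is genuinely different. The paper avoids Tauberian theory altogether: it writes $q^{-\gamma}\asymp \mathrm{Beta}(\gamma,q+1)=\int_0^1 r^q(1-r)^{\gamma-1}\,dr$ (uniformly in $q$, by Stirling), swaps sum and integral by Tonelli (all terms being non-negative), and reduces the statement to the convergence of the single explicit integral $\int_0^1 M_r(u,u)(1-r)^{\gamma-1}\,dr$, which is finite iff $\gamma>1/2$ by inspection of the $(1-r)^{\gamma-3/2}$ singularity; both implications drop out simultaneously from the two-sided comparison. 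You instead pass through Karamata's Tauberian theorem to get the partial-sum asymptotic $S(N)=\sum_{q\le N}H_q^2(u)/q!\sim c(u)\sqrt{N}$ and then finish with summation by parts plus a dyadic-block lower bound for divergence. Your route invokes heavier machinery (Karamata, correctly justified by the non-negativity of the coefficients), but it buys a sharper intermediate fact --- the precise $\sqrt{N}$ growth of the Ces\`aro sums of $H_q^2(u)/q!$ --- which the paper's Abelian argument never makes explicit. Both proofs are complete; yours is a valid alternative, just not the one in the paper.
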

\begin{proof}
	We start observing that by Stirling's approximation we have, as $q\rightarrow\infty$,
	\[
	\frac{1}{q^{\gamma}}\asymp\frac{1}{(q+1)^{\gamma}}\asymp{\rm Beta}(\gamma,q+1)\asymp\int_0^1 r^{q}(1-r)^{\gamma-1}\,dr\,.
	\]
	Using this fact, Fubini-Tonelli's theorem and Mehler's formula, we have that the series in \eqref{series beta} is convergent if and only if
	\[
	\int_0^1\left(\sum_{q=0}^\infty H_q^2(u)\frac{r^q}{q!}\right)\,(1-r)^{\gamma-1}dr=\int_0^1\,M_r(u,u)\,(1-r)^{\gamma-1}dr<\infty\,.
	\]
	Thus, recalling \eqref{Mehlerdef}, we conclude observing that
	\[
	\int_0^1\exp\left( \frac{2u^2r}{2(1+r)} \right) (1-r)^{\gamma-3/2}\,dr<\infty\quad \iff \quad \gamma >1/2\,.
	\]
\end{proof}

\subsection{Proof of Theorem \ref{thm:main}}

We are finally ready to prove Theorem \ref{thm:main}. First of all, we have the following orthogonal decomposition for $\varphi=\mathbf{1}_{[u,\infty)}$ in $L^2(\mathbb R, \phi(x)\,dx)$
\begin{equation}
	\label{chao deco indicator}
	\varphi=\mathbf{1}_{[u,\infty)}=\sum_{q=0}^\infty a_q(\varphi)\,H_q\,,
\end{equation}
where $$a_q(\varphi)=\frac{1}{q!}\int_\R \varphi(x)\,H_q(x)\,\phi(x)\,dx=\frac{1}{q!}\int_u^\infty H_q(x)\,\phi(x)\,dx=\frac{1}{q!}\,H_{q-1}(u)\phi(u) \,,$$
where the last equality follows by integration by parts, the recursive definition of Hermite polynomials and the fact that $H_q'=qH_{q-1}$. As a consequence, by \eqref{MalliavinY}, $V(u)\in\mathbb{D}^{p,2}$ if and only if
\[
\sum_{q=1}^\infty  \frac{q^p}{q!}H_{q-1}^2(u)\int_{E^2}K(x,y)^q\,\mu(dx)\,\mu(dy)<\infty\,.
\]
Thus, since we are assuming \eqref{assumption moments} and the series is with positive terms, we have
\[
V(u)\in\mathbb{D}^{p,2}\quad \iff \quad \sum_{q=1}^\infty  \frac{q^{p-\beta}}{q!}H_{q-1}^2(u)<\infty\quad \iff \quad \sum_{q=0}^\infty  \frac{q^{p-\beta-1}}{q!}H_{q}^2(u)<\infty\,
\]
Thus, the proof is concluded by applying \eqref{series beta} with $\gamma=\beta+1-p$.

\section*{Acknowledgments}
We acknowledge financial support from MUR Excellence Department Project MatMod@TOV awarded to the Department of Mathematics, University of Rome Tor Vergata, CUP E83C18000100006. We acknowledge support from the MUR 2022 PRIN project GRAFIA, project code 202284Z9E4. We acknowledge support from INdAM group GNAMPA.

%
% ---- Bibliography ----
%

\end{document}